\newtheorem{defi}{Definition}[section]
\newtheorem{lem}[defi]{Lemma}
\newtheorem{prop}[defi]{Proposition}
\newtheorem{thm}[defi]{Theorem}
\newtheorem{ex}[defi]{Example}
\DeclareMathOperator{\rank}{rank}
\title{
Association schemes on
the Schubert cells of a Grassmannian
}
\author{Yuta Watanabe}
\date{\today}
\begin{document}
\maketitle

\begin{abstract}
Let $\mathbb{F}$ be any field.
The Grassmannian $\mathrm{Gr}(m,n)$ is the set of 
$m$-dimensional subspaces in $\mathbb{F}^n$, 
and the general linear group $\mathrm{GL}_n(\mathbb{F})$ acts transitively on it.
The Schubert cells of $\mathrm{Gr}(m,n)$ are the orbits of 
the Borel subgroup $\mathcal{B} \subset \mathrm{GL}_n(\mathbb{F})$ 
on $\mathrm{Gr}(m,n)$.
We consider the association scheme on each Schubert cell
defined by the $\mathcal{B}$-action
and show it is symmetric and it is
the \emph{generalized wreath product} of 
one-class association schemes,
which was introduced by
R.~A.~Bailey~[European Journal of Combinatorics 27 (2006) 428--435].
\bigskip

\noindent
{\bf 2010 Mathematics Subject Classification}:
51E20; 14M15\\
\noindent
{\bf Keywords}:
subspace lattice;
Schubert cell;
Borel subgroup;
association scheme;
generalized wreath product
\end{abstract}

\section{Introduction}

Let $n$ be a positive integer, 
and let $\mathbb{F}$ be any field.
The \emph{subspace lattice} $\mathcal{P}_n(\mathbb{F})$
(also known as the \emph{finite projective space})
is the poset of linear subspaces of $\mathbb{F}^n$.
The general linear group $\mathrm{GL}_n(\mathbb{F})$ 
acts on $\mathcal{P}_n(\mathbb{F})$.
The natural grading structure
of $\mathcal{P}_n(\mathbb{F})$
is given by $\mathrm{GL}_n(\mathbb{F})$-action,
and
each fiber (i.e., orbit) is called a \emph{Grassmannian}.
In other words,
the Grassmannian $\mathrm{Gr}(m,n)$ is the set of 
$m$-dimensional subspaces in $\mathbb{F}^n$, where $0 \le m \le n$.
Let $\mathcal{B}$ denote the Borel subgroup of $\mathrm{GL}_n(\mathbb{F})$.
Then, the $\mathcal{B}$-action defines 
a finer ``hyper-cubic'' grading structure
of $\mathcal{P}_n(\mathbb{F})$,
and
each fiber contained in $\mathrm{Gr}(m,n)$
is called a \emph{Schubert cell of $\mathrm{Gr}(m,n)$}.
See \cite{MR2474907} for details.
The author showed in \cite{W}
that
the algebra defined from the ``hyper-cubic'' grading structure of 
$\mathcal{P}_n(\mathbb{F})$ together with its incidence structure
has a close relation to the quantum affine algebra
$U_q(\widehat{\mathfrak{sl}}_2)$, if $\mathbb{F}$ is a finite field of $q^2$ elements.
In this paper,
we study the Schubert cells of a Grassmannian 
from the combinatorial point of view of \emph{association schemes}.
More precisely,
we show that the association scheme
defined by the $\mathcal{B}$-action
on each Schubert cell is a 
\emph{generalized wreath product} of 
one-class association schemes
with the base set $\mathbb{F}$.
The concept of 
a generalized wreath product
of association schemes
was introduced by
R.~A.~Bailey~\cite{MR2206477} in 2006.
The (usual) wreath product
of association schemes has
been actively studied (see e.g.,
\cite{MR2712017, MR2610292, MR2002219, MR2964723, MR3612420, MR2747797, MR3047011, MR2802177}),
and we may view the result of this paper as demonstrating 
the fundamental importance of Bailey's generalization as well.

Before the main discussion,
we briefly recall the notion of the generalized wreath product of association schemes.
For the definition of association schemes,
see \cite{MR882540, MR2535398, MR2184345},
and
for the theory of posets,
see \cite{MR2868112}.
Let $(X,\le)$ be a nonempty finite poset.
A subset $Y$ in $X$ is called an \emph{anti-chain}
if any two elements in $Y$ is incomparable.
For an anti-chain $Y$ in $X$,
define the \emph{down-set} (also known as the \emph{order ideal}) by 
\[
\mathrm{Down}(Y) = \lbrace x \in X \mid \text{$x < y$ for some $y \in Y$}\rbrace.
\]
Note that this definition follows \cite{MR2206477}
and it is different from \cite{MR2868112},
where 
$\mathrm{Down}(Y) \cup Y$ is called 
the down-set of $Y$.
For each $x \in X$,
let $\mathcal{Q}_x$ denote an $r_x$-class association scheme
on a set $\Omega_x$.
We do not assume either $\mathcal{Q}_x$ is symmetric
or $\Omega_x$ is finite.
Let $R_{x,i}$ denote the $i$-th associate class
for $i \in \lbrace 0,1,\ldots,r_x\rbrace$.
By convention,
we choose the index 
so that $R_{x,0} = \lbrace (\omega, \omega) \mid \omega \in \Omega_x\rbrace$.
We set $\Omega = \prod_{x \in X} \Omega_x$.
For each anti-chain $Y$ in $X$
and for each $(i_x)_{x \in Y} \in \prod_{x \in Y} \lbrace 1,2,\ldots,r_x\rbrace$,
let
$R(Y,(i_x)_{x \in Y})$ denote the set of
$((\alpha_x)_{x \in X}, (\beta_x)_{x \in X}) \in \Omega \times \Omega$
satisfying
(i) $\alpha_x = \beta_x$ if $x \in X \setminus (Y \cup \mathrm{Down}(Y))$,
and
(ii) $(\alpha_x, \beta_x) \in R_{x,i_x}$ if $x \in Y$.
Let $\mathcal{R}$ denote the set of 
$R(Y,(i_x)_{x \in Y})$ for all anti-chains $Y$ in $X$ and 
$(i_x)_{x \in Y} \in \prod_{x \in Y} \lbrace 1,2,\ldots,r_x\rbrace$.

\begin{thm}[cf. {\cite[Theorem 3]{MR2206477}}]\label{Bailey}
The pair $(\Omega, \mathcal{R})$ is an association scheme.
\end{thm}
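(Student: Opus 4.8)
The plan is to verify directly the four axioms of an association scheme for $(\Omega,\mathcal{R})$: that $\mathcal{R}$ partitions $\Omega\times\Omega$; that the diagonal $\{(\omega,\omega)\mid\omega\in\Omega\}$ belongs to $\mathcal{R}$; that $\mathcal{R}$ is closed under transposition; and that for all $R,S,T\in\mathcal{R}$ the number of $\gamma\in\Omega$ with $(\alpha,\gamma)\in R$ and $(\gamma,\beta)\in S$ is the same for every $(\alpha,\beta)\in T$. (This number is a cardinal, since the $\Omega_x$, hence $\Omega$, may be infinite; the conventions for association schemes adopted here allow this.)

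For the partition property I would argue as follows. Given $(\alpha,\beta)=((\alpha_x)_{x\in X},(\beta_x)_{x\in X})\in\Omega\times\Omega$, set $D=\{x\in X\mid\alpha_x\neq\beta_x\}$, let $Y$ be the set of maximal elements of $D$, and for $x\in Y$ let $i_x\in\{1,\dots,r_x\}$ be the index with $(\alpha_x,\beta_x)\in R_{x,i_x}$ (so $i_x\geq1$, as $\alpha_x\neq\beta_x$). Since $X$ is finite, $Y$ is an anti-chain of $X$ and every element of $D$ lies in $Y$ or below an element of $Y$; hence $D\subseteq Y\cup\mathrm{Down}(Y)$ and $(\alpha,\beta)\in R(Y,(i_x)_{x\in Y})$. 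For uniqueness, if $(\alpha,\beta)\in R(Y',(i'_x)_{x\in Y'})$ then condition (ii) forces $Y'\subseteq D$ and condition (i) forces $D\subseteq Y'\cup\mathrm{Down}(Y')$; a short poset argument using that $Y'$ is an anti-chain then shows that $Y'$ must be exactly the set of maximal elements of $D$, so $Y'=Y$ and $i'_x=i_x$ for all $x\in Y$. The diagonal is $R(\emptyset,())$, obtained from the empty anti-chain. Transposition is immediate: if $R_{x,i}^{\top}=R_{x,i^{\ast}}$ in $\mathcal{Q}_x$, then $R(Y,(i_x)_{x\in Y})^{\top}=R(Y,(i_x^{\ast})_{x\in Y})\in\mathcal{R}$.

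The substance is the computation of the structure constants. Fix $R=R(Y,(i_x)_{x\in Y})$ and $S=R(Z,(j_x)_{x\in Z})$, and for a pair $(\alpha,\beta)$ write $n(\alpha,\beta)$ for the number of $\gamma$ with $(\alpha,\gamma)\in R$ and $(\gamma,\beta)\in S$. The constraints cutting out such $\gamma$ involve the coordinates separately, so $n(\alpha,\beta)=\prod_{x\in X}N_x(\alpha_x,\beta_x)$, where $N_x(\alpha_x,\beta_x)$ counts the $c\in\Omega_x$ with $(\alpha_x,c)$ obeying the $R$-constraint at $x$ and $(c,\beta_x)$ obeying the $S$-constraint at $x$. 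I would then go through the nine possibilities for the pair of constraint-types at $x$ (for $R$: ``$\alpha_x=c$'' when $x\notin Y\cup\mathrm{Down}(Y)$, ``$(\alpha_x,c)\in R_{x,i_x}$'' when $x\in Y$, and ``$c$ arbitrary'' when $x\in\mathrm{Down}(Y)$; likewise for $S$) and check that $N_x(\alpha_x,\beta_x)$ equals $1$, a valency of $\mathcal{Q}_x$, or $|\Omega_x|$ — a positive constant not depending on $(\alpha_x,\beta_x)$ — unless $x\notin\mathrm{Down}(Y)\cup\mathrm{Down}(Z)$, in which case $N_x(\alpha_x,\beta_x)$ is a function of the $\mathcal{Q}_x$-class containing $(\alpha_x,\beta_x)$ (an intersection number of $\mathcal{Q}_x$ when $x\in Y\cap Z$, the indicator of a single class when exactly one of $x\in Y$, $x\in Z$ holds, and the indicator of the diagonal class when neither holds).

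Finally one must see that $n(\alpha,\beta)$ depends only on the relation $T=R(W,(k_x)_{x\in W})$ containing $(\alpha,\beta)$. The cases ``$n\equiv0$ on $T$'' and ``$n>0$ somewhere on $T$'' being exhaustive, it is enough to treat the latter, and there the key is the poset lemma: if $n(\alpha,\beta)>0$, so some valid $\gamma$ exists, then $\mathrm{Down}(W)\subseteq\mathrm{Down}(Y)\cup\mathrm{Down}(Z)$. Indeed, if $x<w$ with $w\in W$, then $\alpha_w\neq\beta_w$, so $\alpha_w\neq\gamma_w$ or $\gamma_w\neq\beta_w$; in the first case condition (i) for $R$ puts $w$ in $Y\cup\mathrm{Down}(Y)$ and hence $x$ in $\mathrm{Down}(Y)$, and the second case gives $x\in\mathrm{Down}(Z)$ symmetrically. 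Granting this, choose $(\alpha_0,\beta_0)\in T$ with $n(\alpha_0,\beta_0)>0$ and apply the lemma; then for any $(\alpha,\beta)\in T$, each factor $N_x(\alpha_x,\beta_x)$ with $x\in\mathrm{Down}(Y)\cup\mathrm{Down}(Z)$ is the same positive constant as for $(\alpha_0,\beta_0)$, while for $x\notin\mathrm{Down}(Y)\cup\mathrm{Down}(Z)$ one has $x\notin\mathrm{Down}(W)$, so conditions (i)--(ii) for $T$ determine the $\mathcal{Q}_x$-class of $(\alpha_x,\beta_x)$, which coincides with that of $(\alpha_{0,x},\beta_{0,x})$, leaving that factor unchanged too. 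Hence $n(\alpha,\beta)=\prod_{x\in X}N_x(\alpha_x,\beta_x)=n(\alpha_0,\beta_0)$ for all $(\alpha,\beta)\in T$. The main obstacles I expect are organising the coordinatewise count and its nine-case analysis cleanly, and proving the poset lemma relating $\mathrm{Down}(W)$ to $\mathrm{Down}(Y)\cup\mathrm{Down}(Z)$; the possible infiniteness of the $\Omega_x$ adds nothing beyond using cardinal arithmetic for the products.
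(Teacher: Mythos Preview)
The paper does not supply its own proof of this theorem: it is stated with a citation to Bailey's original result and a remark that the finiteness and symmetry hypotheses can be dropped. So there is no in-paper argument to compare against; your proposal is in effect a self-contained proof of the cited result in the stated generality.

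Your plan is correct. The partition, diagonal, and transposition checks are routine and your arguments for them are fine (the uniqueness argument for the partition, in particular, is exactly right: from $Y'\subseteq D\subseteq Y'\cup\mathrm{Down}(Y')$ and the anti-chain property one gets that $Y'$ is the set of maximal elements of $D$). For the intersection numbers, the coordinatewise factorisation $n(\alpha,\beta)=\prod_{x\in X}N_x(\alpha_x,\beta_x)$ and the nine-case analysis are the natural route, and your identification of which cases give a constant ($1$, a valency, or $|\Omega_x|$) versus which give a class-dependent quantity is accurate. The poset lemma $\mathrm{Down}(W)\subseteq\mathrm{Down}(Y)\cup\mathrm{Down}(Z)$ whenever $n(\alpha,\beta)>0$ for some $(\alpha,\beta)\in R(W,(k_x))$ is the right bridge, and your proof of it is sound; crucially its conclusion involves only $W,Y,Z$, so establishing it at a single point $(\alpha_0,\beta_0)$ with $n>0$ suffices to pin down every factor $N_x$ for all $(\alpha,\beta)\in T$. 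The dichotomy ``$n\equiv0$ on $T$'' versus ``$n>0$ somewhere on $T$'' then closes the argument, and your treatment of the infinite case via cardinal-valued intersection numbers is adequate. This is essentially Bailey's verification, carried out without the symmetry and finiteness assumptions.
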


The association scheme $(\Omega, \mathcal{R})$ in Theorem \ref{Bailey} is called the 
\emph{generalized wreath product of $\mathcal{Q}_x$
over the poset $X$}.
We remark that Bailey~\cite[Theorem 3]{MR2206477}
assumes that each base set $\Omega_x$ is finite
and each association scheme $\mathcal{Q}_x$ is symmetric.
However, the theorem is still true if 
we drop both of these assumptions.

\section{Subspace lattices}

Throughout this paper, we fix a positive integer $n$ and a field $\mathbb{F}$.
Let $\mathbb{F}^n$ denote the $n$-dimensional column vector space over $\mathbb{F}$.
By the \emph{subspace lattice}, denoted by 
$\mathcal{P}_n(\mathbb{F})$,
we mean the poset consisting of all subspaces
in $\mathbb{F}^n$ with partial order given by inclusion.
We fix 
the sequence $\lbrace V_i \rbrace_{i = 0}^n$ in $\mathcal{P}_n(\mathbb{F})$ such that
each $V_i$ consists of vectors whose
bottom $n-i$ entries are zero.
We remark that 
$\lbrace V_i \rbrace_{i = 0}^n$ is a \emph{(complete) flag} (i.e., a maximal chain) in $\mathcal{P}_n(\mathbb{F})$.

Let $\mathrm{Mat}_n(\mathbb{F})$
denote the set of $n \times n$ matrices with entries in $\mathbb{F}$.
Let $\mathrm{GL}_n(\mathbb{F})$ denote the set of 
invertible matrices in $\mathrm{Mat}_n(\mathbb{F})$.
Observe that 
$\mathrm{GL}_n(\mathbb{F})$ acts on $\mathbb{F}^n$
by left multiplication,
and hence it acts on $\mathcal{P}_n(\mathbb{F})$.
Let $\mathcal{B}$ denote the (Borel) subgroup in $\mathrm{GL}_n(\mathbb{F})$ stabilizing
$\lbrace V_i \rbrace_{i = 0}^n$.
In other words,
$\mathcal{B}$ consists of all 
upper triangular invertible matrices in $\mathrm{Mat}_n(\mathbb{F})$.
In this paper,
we consider the $\mathcal{B}$-action on $\mathcal{P}_n(\mathbb{F})$.

A matrix in $\mathrm{Mat}_n(\mathbb{F})$
is said to be in \emph{reverse column echelon form}
if the following two conditions are met:
\begin{enumerate}
\item[(CE1)] Any zero columns are right of all nonzero columns.
\item[(CE2)] The last nonzero entry of a nonzero column is always
strictly below of the last nonzero entry of its right column.
\end{enumerate}
A matrix in $\mathrm{Mat}_n(\mathbb{F})$
is said to be in \emph{reduced reverse column echelon form}
if it is in reverse column echelon form and 
the following third condition is also met:
\begin{enumerate}
\item[(CE3)] Every last nonzero entry of a nonzero column is $1$ and 
is the only nonzero entry in its row.
\end{enumerate}

By elementary linear algebra, we have the following.
\begin{prop}\label{prop:bij}
There exists a bijection between the following two sets:
\begin{enumerate}
\item the set of all matrices in $\mathrm{Mat}_n(\mathbb{F})$ 
in reduced reverse column echelon form,
\item the set $\mathcal{P}_n(\mathbb{F})$ of all subspaces in $\mathbb{F}^n$,
\end{enumerate}
that sends a matrix in $\mathrm{Mat}_n(\mathbb{F})$
to its column space.
\end{prop}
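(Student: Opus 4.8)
The plan is to check that the map $\varphi$ sending a matrix $M$ in reduced reverse column echelon form to its column space is both surjective and injective. Surjectivity says every subspace can be brought into reduced reverse column echelon form, and is the easy direction; injectivity is the uniqueness of that form, and is where the content lies. (The whole statement is, after transposing and reversing the order of the coordinates, the classical uniqueness of reduced row echelon form, but a direct argument phrased in terms of the flag $\{V_i\}_{i=0}^n$ is cleaner and fixes notation that will be convenient later.)

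For surjectivity, fix $W \in \mathcal{P}_n(\mathbb{F})$ with $\dim W = d$ and start from any $n \times n$ matrix whose first $d$ columns form a basis of $W$ and whose remaining columns are zero. Column operations — scaling a column, adding a multiple of one column to another, and permuting columns — preserve the column space, so it suffices to reach the reduced reverse column echelon form using them. This is done by the evident elimination: among the not-yet-processed nonzero columns, pick one whose last nonzero entry lies in the lowest row, move it into place, rescale so that entry is $1$, subtract suitable multiples of it from every other column to clear that row, and iterate; termination and the verification of (CE1)--(CE3) are routine. Hence $\varphi$ is onto.

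For injectivity — the main obstacle — suppose $M$ and $M'$ are both in reduced reverse column echelon form with $\varphi(M)=\varphi(M')=W$. By (CE1) each has exactly $\dim W =: d$ nonzero columns, in the first $d$ positions, so it is enough to show these columns agree. Write the nonzero columns of $M$ as $c_1,\dots,c_d$, with the last nonzero entry of $c_k$ in row $p_k$; by (CE2), $p_1>p_2>\cdots>p_d$. The first claim is that $\{p_1,\dots,p_d\}$ depends only on $W$: using (CE3) one shows $W \cap V_i = \mathrm{span}(\{c_k : p_k \le i\})$ for every $i$ — each such $c_k$ lies in $V_{p_k} \subseteq V_i$, and for a combination $\sum_k a_k c_k$ the row-$p_j$ coordinate is exactly $a_j$ by (CE3), forcing $a_j = 0$ whenever $p_j > i$ — so $\dim(W\cap V_i)$ jumps by $1$ exactly at the $p_k$'s, and $M,M'$ share the pivot rows $p_1>\cdots>p_d$. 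The second claim identifies each column: $c_k$ lies in $W\cap V_{p_k}$ with $p_k$-th coordinate $1$, any such vector differs from $c_k$ by an element of $W\cap V_{p_k-1}=\mathrm{span}(c_{k+1},\dots,c_d)$, and by (CE3) $c_k$ is the unique representative of that coset whose coordinates in rows $p_{k+1},\dots,p_d$ all vanish. Thus $c_k$ is determined by $W$ alone, whence $M=M'$ and $\varphi$ is injective, completing the proof.
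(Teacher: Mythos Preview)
Your proof is correct. The paper itself does not give an argument at all: its entire proof is a one-line citation to a standard linear algebra reference, treating the result as elementary and well known. Your write-up therefore goes well beyond what the paper does, supplying a self-contained surjectivity/injectivity argument; the injectivity part is phrased intrinsically in terms of the flag $\{V_i\}$ (the pivot rows are the jump indices of $\dim(W\cap V_i)$, and each column is the unique coset representative with vanishing coordinates in the lower pivot rows), which is a clean way to see uniqueness and dovetails nicely with the flag-based language used later in the paper.
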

\begin{proof}
See for instance \cite{MR3013937}.
\end{proof}

\begin{ex}[$n=7$]\label{ex1}
Let $e_1, e_2, \ldots, e_7$ denote the standard basis for $\mathbb{F}^7$.
Suppose $U$ is the $4$-dimensional subspace
in $\mathbb{F}^7$ given by
$U = \mathrm{Span}\lbrace
8e_1+6e_3+4e_6+2e_7,
8e_1+9e_3+e_4+e_5,
4e_1+e_2+5e_3+e_4,
3e_1+e_2\rbrace$.
Then the following is the matrix corresponding to $U$
by the bijection in Proposition \ref{prop:bij}:
\[
M = 
\begin{pmatrix}
4 & 7 & 1 & 3 & 0 & 0 & 0 \\
0 & 0 & 0 & 1 & 0 & 0 & 0 \\
3 & 4 & 5 & 0 & 0 & 0 & 0 \\
0 & 0 & 1 & 0 & 0 & 0 & 0 \\
0 & 1 & 0 & 0 & 0 & 0 & 0 \\
2 & 0 & 0 & 0 & 0 & 0 & 0 \\
1 & 0 & 0 & 0 & 0 & 0 & 0
\end{pmatrix}.
\]
\end{ex}
Observe that
for 
$M, N \in \mathrm{Mat}_n(\mathbb{F})$
in reduced reverse column echelon form
and for $G \in \mathcal{B}$,
the column space of $N$ moves to 
that of $M$ by the $G$-action
if and only if 
$M$ and $GN$ are column equivalent.
Since $GN$ is in reverse column echelon form
(but not necessarily reduced),
these conditions are equivalent to
$M = GNH^{T}$ for some $H \in \mathcal{B}$.
For notational convenience, 
we write $M \sim N$
if there exist $G, H \in \mathcal{B}$ such that $M = GNH^T$.
Observe that $\sim$ is an equivalence relation on $\mathrm{Mat}_n(\mathbb{F})$.

For the rest of this paper,
we will identify $\mathcal{P}_n(\mathbb{F})$ with the set of 
all matrices in $\mathrm{Mat}_n(\mathbb{F})$ in reduced reverse column echelon form
by the bijection in Proposition \ref{prop:bij}.

\section{The $\mathcal{B}$-action on $\mathcal{P}_n(\mathbb{F})$}

For a positive integer $m$, we write 
$[m] = \lbrace 1,2,\ldots, m\rbrace$.
We define a partial order in
the index set $[n] \times [n]$ of matrices in $\mathrm{Mat}_n(\mathbb{F})$ by 
$(i,j) \le (k,l)$ if $i \le k$ and $j \le l$.
This is known as the \emph{direct product order}
in \cite[Section~3.2]{MR2868112}.
For $M \in \mathrm{Mat}_n(\mathbb{F})$,
by the \emph{support} of $M$, denoted by $\mathrm{Supp}(M)$, we mean 
the subposet of $[n] \times [n]$
consisting of all indices $(i,j) \in [n] \times [n]$ with $M_{i,j} \neq 0$.
The \emph{pivot-set} of $M$,
denoted by $\mathrm{Piv}(M)$,
is the set of all maximal elements in $\mathrm{Supp}(M)$.
Each element in the pivot-set is called a \emph{pivot}.
Observe that
$(i,j) \in \mathrm{Piv}(M)$
if and only if
$M_{i,j} \neq 0$ and $M_{k,l} = 0$ if $(k,l) > (i,j)$.
We remark that
every entry indexed by a pivot of a matrix in $\mathcal{P}_n(\mathbb{F})$ 
must be $1$ by the condition (CE3).

\begin{lem}\label{lem:wequiv}
For $M, N \in \mathcal{P}_n(\mathbb{F})$,
the following are equivalent:
\begin{enumerate}
\item $M \sim N$,
\item $\mathrm{Piv}(M) = \mathrm{Piv}(N)$.
\end{enumerate}
\end{lem}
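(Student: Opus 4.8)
The plan is to prove both implications by translating the statement about matrices into one about the subspace $U = \mathrm{col}(M)$ and the fixed flag $\{V_i\}_{i=0}^n$. The key preliminary observation is that if $M \in \mathcal{P}_n(\mathbb{F})$ represents an $m$-dimensional subspace $U$, then $\mathrm{Piv}(M)$ is completely determined by $U$ through the numbers $d_i := \dim(U \cap V_i)$. Concretely, write the pivots of $M$ as $(r_1,1),(r_2,2),\ldots,(r_m,m)$ with $r_1 > r_2 > \cdots > r_m$; this is legitimate because by (CE1) the nonzero columns of $M$ are exactly columns $1,\ldots,m$, and by (CE2) their last nonzero entries sit in strictly decreasing rows. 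I would then show $d_i = \#\{\, j : r_j \le i \,\}$ for every $i$. For ``$\ge$'', each column $j$ with $r_j \le i$ is supported in rows $\le i$ and hence lies in $V_i$. For ``$\le$'', use (CE3): row $r_j$ has its unique nonzero entry $M_{r_j,j} = 1$, so in any vector $\sum_k c_k\,\mathrm{col}_k$ lying in $V_i$ the coefficient $c_j$ equals the $r_j$-th entry, which vanishes as soon as $r_j > i$. Consequently the step function $i \mapsto d_i$ determines the set $\{r_1,\ldots,r_m\}$, and since the pairing of rows with columns is forced by $r_1 > \cdots > r_m$, it determines $\mathrm{Piv}(M)$ entirely.

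Granting this, (i) $\Rightarrow$ (ii) is short. If $M \sim N$, say $M = GNH^T$ with $G,H \in \mathcal{B}$, then $H^T$ is invertible, so $\mathrm{col}(NH^T) = \mathrm{col}(N)$ and hence $\mathrm{col}(M) = G\cdot\mathrm{col}(N)$. Since $\mathcal{B}$ stabilizes the flag, $G V_i = V_i$, so $\dim(\mathrm{col}(M)\cap V_i) = \dim G(\mathrm{col}(N)\cap V_i) = \dim(\mathrm{col}(N)\cap V_i)$ for all $i$; by the preliminary observation $\mathrm{Piv}(M) = \mathrm{Piv}(N)$.

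For (ii) $\Rightarrow$ (i), since $\sim$ is an equivalence relation it suffices to show that every $M \in \mathcal{P}_n(\mathbb{F})$ with $\mathrm{Piv}(M) = S$ is $\sim$-equivalent to the canonical representative $M_S$, the $0$–$1$ matrix whose support is exactly $S$. One checks directly that $M_S$ satisfies (CE1)–(CE3) and has $\mathrm{Piv}(M_S) = S$ (here $S$ is an anti-chain, so its elements are all maximal). To pass from $M$ to $M_S$ I would clear the off-pivot nonzero entries one at a time: given a nonzero $M_{i,j}$ with $(i,j) \notin S$, let $(r_j,j) \in S$ be the pivot of column $j$, so $i < r_j$ and, by (CE3), row $r_j$ of $M$ has its only nonzero entry, a $1$, in column $j$. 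Replacing row $i$ by $\text{row }i - M_{i,j}\cdot(\text{row }r_j)$ is left multiplication by $I - M_{i,j}E_{i,r_j}$, which lies in $\mathcal{B}$ because $i < r_j$, and it annihilates the $(i,j)$ entry while changing nothing else. Moreover, by (CE3) every off-pivot nonzero entry lies in a non-pivot row, so these operations never touch the rows $r_1,\ldots,r_m$ and may be performed independently; after finitely many of them $M$ becomes $M_S$. Thus $\tilde G M = M_S$ for some $\tilde G \in \mathcal{B}$, i.e.\ $M = \tilde G^{-1} M_S$ with $\tilde G^{-1} \in \mathcal{B}$, so $M \sim M_S$; applying the same to $N$ gives $M \sim M_S \sim N$.

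I expect the only delicate step to be the preliminary observation — extracting exactly the right mileage from (CE1)–(CE3) so that $\mathrm{Piv}(M)$ is recovered from the dimensions $\dim(U\cap V_i)$ alone. Once that is in place, the flag-invariance argument for one direction and the explicit row reduction for the other are routine, the clean termination of the reduction resting precisely on the fact that the pivot rows are never modified.
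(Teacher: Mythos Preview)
Your proof is correct and takes a genuinely different route from the paper. For (i) $\Rightarrow$ (ii), the paper argues by direct entry computation: if $(i,j)\in\mathrm{Piv}(N)$ and $M=GNH^T$ with $G,H$ upper triangular, then $M_{k,l}=\sum_{s\ge k,\;t\ge l}G_{k,s}N_{s,t}H_{l,t}$ vanishes for $(k,l)>(i,j)$ and equals $G_{i,i}H_{j,j}\ne 0$ at $(i,j)$, giving $\mathrm{Piv}(N)\subseteq\mathrm{Piv}(M)$. Your approach instead establishes the preliminary fact that $\mathrm{Piv}(M)$ is recoverable from the jump set of $i\mapsto\dim(\mathrm{col}(M)\cap V_i)$ and then invokes the flag-stabilizing property of $\mathcal{B}$; this is more conceptual and in fact proves slightly more, namely that the pivot set depends only on the column space. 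For (ii) $\Rightarrow$ (i), both arguments reduce $M$ to the canonical $0$--$1$ matrix $X$ supported on $\mathrm{Piv}(M)$, but the paper writes down a single $G\in\mathcal{B}$ explicitly (column $j$ of $G$ is column $k$ of $M$ when $(j,k)\in\mathrm{Piv}(M)$, else the $j$-th standard basis vector) and checks $M=GX$, whereas you factor the reduction as a product of elementary upper-triangular row operations. The paper's construction is more compact; yours is more algorithmic and makes transparent exactly where (CE3) enters (the pivot rows are never altered, so the operations commute and terminate).
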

\begin{proof}
(i) $\Rightarrow$ (ii)
Suppose $M \sim N$.
There exist
$G, H \in \mathcal{B}$ such that $M = GNH^T$.
It suffices to show $\mathrm{Piv}(N) \subseteq \mathrm{Piv}(M)$.
For $(i,j) \in \mathrm{Piv}(N)$,
we have
$N_{i,j} = 1$ and $N_{k,l} = 0$ if $(k,l) > (i,j)$.
Since
$G, H$ are upper triangular,
we have
\[
M_{k,l} = \sum_{s = k}^n\sum_{t = l}^n G_{k,s}N_{s,t}H_{l,t}
=
\begin{cases}
G_{i,i}H_{j,j} & \text{if $(k,l) = (i,j)$},\\
0 & \text{if $(k,l) > (i,j)$}.
\end{cases}
\]
Since $G, H$ are invertible,
$G_{i,i}H_{j,j} \neq 0$.
These imply $(i,j) \in \mathrm{Piv}(M)$
and hence $\mathrm{Piv}(N) \subseteq \mathrm{Piv}(M)$.

(ii) $\Rightarrow$ (i)
Suppose $\mathrm{Piv}(M) = \mathrm{Piv}(N)$.
Take $X \in \mathcal{P}_n(\mathbb{F})$ with
$X_{i,j} = 1$ if $(i,j) \in \mathrm{Piv}(M)$ and $X_{i,j} = 0$ otherwise.
Observe that
for each $j \in [n]$,
there exists at most one $k$ such that $(j,k) \in \mathrm{Piv}(M)$
and then we
define $G \in \mathrm{Mat}_n(\mathbb{F})$ by
\[
G_{i,j} =
\begin{cases}
M_{i,k} & \text{if $(j,k) \in \mathrm{Piv}(M)$ for some $k$},\\
\delta_{i,j} & \text{if there is no $k$ such that  $(j,k) \in \mathrm{Piv}(M)$}
\end{cases}
\]
for $i,j \in [n]$.
Then we have $G_{i,i} = 1$ for $i \in [n]$
and $G_{i,j} = 0$ if $j < i$ for $i, j \in [n]$.
Thus 
$G \in \mathcal{B}$.
By the direct calculation, we have $M = GX$
and 
hence, $M \sim X$.
Similarly we have $N \sim X$ and so $M \sim N$.
\end{proof}

Let $1 \le m \le n-1$ and $M \in \mathcal{P}_n(\mathbb{F})$
with $\rank M = m$.
Note that 
we avoid the trivial cases $m = 0$ and $m = n$.
Since the pivots of $M$ lie in the first $m$ columns,
$\mathrm{Piv}(M)$
is an anti-chain in $[n] \times [m]$ of size $m$.
For $1 \le m \le n-1$
and for an anti-chain $\alpha$ in $[n] \times [m]$
 of size $m$,
we set
\begin{equation}
\mathcal{O}_{\alpha} = \lbrace M \in \mathcal{P}_n(\mathbb{F}) \mid
\mathrm{Piv}(M) = \alpha
\rbrace.
\label{OA}
\end{equation}
For each $1 \le m \le n-1$ and
each anti-chain $\alpha$ in $[n] \times [m]$ of size $m$,
consider
$M \in \mathcal{P}_n(\mathbb{F})$ with
$M_{i,j} = 1$ if $(i,j) \in \alpha$ and $M_{i,j} = 0$ otherwise.
Then we have $M \in \mathcal{O}_\alpha$
and in particular $\mathcal{O}_\alpha \neq \emptyset$.

\begin{prop}\label{prop:OA}
The rank of any matrix in \eqref{OA} 
is $m = |\alpha|$.
Moreover,
each subset \eqref{OA}
is an orbit of the $\mathcal{B}$-action on $\mathcal{P}_n(\mathbb{F})$.
\end{prop}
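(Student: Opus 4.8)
The plan is to reduce everything to Lemma~\ref{lem:wequiv} and to the description of the $\mathcal{B}$-action via the equivalence $\sim$ recorded just before it.

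For the rank claim I would argue as follows. Let $M \in \mathcal{O}_\alpha$, so $\mathrm{Piv}(M) = \alpha$. Since $\alpha$ is an anti-chain, its $m$ elements lie in $m$ distinct columns (two pivots $(i,j)$ and $(i',j)$ in a common column would be comparable). For a matrix in reduced reverse column echelon form the pivots are precisely the last nonzero entries of the nonzero columns, one per nonzero column, so $M$ has exactly $m$ nonzero columns; by (CE1)--(CE2) these columns are in column echelon form and hence linearly independent, which gives $\rank M = m = |\alpha|$.

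For the orbit claim I would first check that $\mathcal{O}_\alpha$ is stable under $\mathcal{B}$: if $G \in \mathcal{B}$ and $M \in \mathcal{O}_\alpha$, and $M' \in \mathcal{P}_n(\mathbb{F})$ represents the image of the column space of $M$ under $G$, then $M' = GMH^T$ for some $H \in \mathcal{B}$ by the discussion before Lemma~\ref{lem:wequiv}, so $M' \sim M$, and Lemma~\ref{lem:wequiv} gives $\mathrm{Piv}(M') = \mathrm{Piv}(M) = \alpha$, i.e. $M' \in \mathcal{O}_\alpha$. Next I would check transitivity: given $M, N \in \mathcal{O}_\alpha$ we have $\mathrm{Piv}(M) = \alpha = \mathrm{Piv}(N)$, hence $M \sim N$ by Lemma~\ref{lem:wequiv}, say $M = GNH^T$ with $G, H \in \mathcal{B}$; since $H^T$ is invertible, $M$ and $GN$ have the same column space, which is the image under $G$ of the column space of $N$, so $G$ sends the subspace $N$ to the subspace $M$. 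Since $\mathcal{O}_\alpha$ is nonempty (noted just before the proposition), is $\mathcal{B}$-stable, and carries a transitive $\mathcal{B}$-action, it is a single $\mathcal{B}$-orbit.

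I do not expect a genuine obstacle: the argument is bookkeeping built on Lemma~\ref{lem:wequiv}. The one point needing care is the transitivity step, where one must keep column equivalence (right multiplication by $H^T$, which only changes the chosen spanning set) distinct from the $\mathcal{B}$-action on subspaces (left multiplication by $G$), so that the matrix identity $M = GNH^T$ is correctly read as ``$G$ carries the subspace $N$ to the subspace $M$''.
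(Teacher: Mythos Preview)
Your proof is correct and follows the same approach as the paper, which simply records that the result is ``immediate from the construction and Lemma~\ref{lem:wequiv}.'' You have spelled out precisely what that sentence means: the rank claim follows from the reduced reverse column echelon form, and the orbit claim is the observation that, by the discussion preceding Lemma~\ref{lem:wequiv}, two matrices in $\mathcal{P}_n(\mathbb{F})$ lie in the same $\mathcal{B}$-orbit if and only if they are $\sim$-equivalent, which by Lemma~\ref{lem:wequiv} happens if and only if they have the same pivot-set.
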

\begin{proof}
Immediate from the construction and Lemma \ref{lem:wequiv}.
\end{proof}

Recall the Grassmannian $\mathrm{Gr}(m,n)$
and we identify $\mathrm{Gr}(m,n)$ with a set of matrices
by the bijection in Proposition \ref{prop:bij}.
In other words,
\[
\mathrm{Gr}(m,n) = \lbrace M \in \mathcal{P}_n(\mathbb{F}) \mid
\rank M = m
\rbrace.
\]
By Proposition \ref{prop:OA},
each $\mathcal{O}_\alpha$ in \eqref{OA} is
a $\mathcal{B}$-orbit in $\mathrm{Gr}(m,n)$,
where $m = |\alpha|$.
Thus,
it is called a \emph{Schubert cell of a Grassmannian} \cite{MR2474907}.

\begin{ex}[$n=7$, $m = 4$]\label{ex2}
Take $M \in \mathcal{P}_7(\mathbb{F})$ as in Example \ref{ex1}.
Then we have
$\mathrm{Piv}(M) = \lbrace
(2,4), (4,3), (5,2), (7,1)
\rbrace$.
Moreover,
$\mathcal{O}_{\mathrm{Piv}(M)}$ is
the set of matrices of the form
\begin{equation}
\begin{pmatrix}
\ast & \ast & \ast & \ast & 0 & 0 & 0 \\
0 & 0 & 0 & 1 & 0 & 0 & 0 \\
\ast & \ast & \ast & 0 & 0 & 0 & 0 \\
0 & 0 & 1 & 0 & 0 & 0 & 0 \\
0 & 1 & 0 & 0 & 0 & 0 & 0 \\
\ast & 0 & 0 & 0 & 0 & 0 & 0 \\
1 & 0 & 0 & 0 & 0 & 0 & 0
\end{pmatrix},
\label{ex:Oalpha}
\end{equation}
where the symbol $\ast$ denotes an arbitrary element
in $\mathbb{F}$.
\end{ex}

\begin{lem}\label{lem:diag}
Let $1 \le m \le n-1$
and let $\alpha$ denote an anti-chain in $[n] \times [m]$ of size $m$.
For $M, N, M', N' \in \mathcal{O}_\alpha$,
the following are equivalent:
\begin{enumerate}
\item $(M,N)$ moves to $(M',N')$ by the diagonal $\mathcal{B}$-action,
\item $\mathrm{Piv}(M-N) = \mathrm{Piv}(M'-N')$.
\end{enumerate}
\end{lem}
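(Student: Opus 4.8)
The plan is to make the orbit structure of $\mathcal{O}_\alpha$ completely explicit and then prove the two implications separately, the second being the substantive one. Write $\alpha=\{(p_j,j):j\in[m]\}$ with $p_1>\cdots>p_m$, put $P=\{p_1,\dots,p_m\}$, and let $F_\alpha$ be the set of \emph{free positions} $(i,j)\in[n]\times[m]$ with $i<p_j$ and $i\notin P$. By the reduced reverse column echelon conditions, a matrix $M\in\mathcal{O}_\alpha$ carries a $1$ at each point of $\alpha$, a $0$ at each index off $\alpha\cup F_\alpha$, and an arbitrary entry at each point of $F_\alpha$; hence $M\mapsto M|_{F_\alpha}$ identifies $\mathcal{O}_\alpha$ with $\mathbb{F}^{F_\alpha}$, the $0/1$ matrix $X_\alpha$ (entries $1$ on $\alpha$, $0$ elsewhere) corresponding to $0$. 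In particular, for $M,N\in\mathcal{O}_\alpha$ the matrix $M-N$ is supported inside $F_\alpha$, so $\mathrm{Piv}(M-N)$ is exactly the set of maximal elements of $\{(i,j)\in F_\alpha:M_{i,j}\ne N_{i,j}\}$, an anti-chain of the subposet $F_\alpha$ of $[n]\times[n]$.

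For (i) $\Rightarrow$ (ii): if $G\cdot M=M'$ and $G\cdot N=N'$ then $M'=GMH_1^T$ and $N'=GNH_2^T$ for column-reduction matrices $H_1,H_2\in\mathcal{B}$. The key point is that although $H_1\ne H_2$ in general, the map $M\mapsto G\cdot M$ is an \emph{affine} automorphism of $\mathcal{O}_\alpha\cong\mathbb{F}^{F_\alpha}$ whose linear part $L$ is ``triangular with respect to $F_\alpha$'': $(Lv)_\gamma$ depends only on the coordinates $v_{\gamma'}$ with $\gamma'\ge\gamma$ in $F_\alpha$, and the coefficient of $v_\gamma$ itself is nonzero. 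This is a routine column-reduction computation, using that the pivot rows of any matrix in $\mathcal{O}_\alpha$ are standard row vectors and that reducing $GM$ only adds a column into columns whose pivots lie strictly below it. Granting this, $M'-N'=L(M-N)$, and an invertible triangular $L$ (whose inverse is of the same form) carries the maximal elements of the support of a vector bijectively onto those of its image; hence $\mathrm{Piv}(M'-N')=\mathrm{Piv}(M-N)$. (Alternatively one may first reprove the pivot-invariance of Lemma~\ref{lem:wequiv} in the sharper form $\mathrm{Piv}(GAH^T)=\mathrm{Piv}(A)$ for $G,H\in\mathcal{B}$ and arbitrary $A$, and argue directly that $MH_1^T-NH_2^T$ and $M-N$ have the same pivot-set.)

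For (ii) $\Rightarrow$ (i): by Proposition~\ref{prop:OA} choose $G_0\in\mathcal{B}$ with $G_0\cdot M=X_\alpha$ and replace $(M,N)$ by $(G_0\cdot M,G_0\cdot N)$, which is legitimate by (i) $\Rightarrow$ (ii) since this keeps $\mathrm{Piv}$ of the difference unchanged; doing the same to $(M',N')$ reduces the problem to $M=M'=X_\alpha$. Now $(X_\alpha,N)$ and $(X_\alpha,N')$ lie in one diagonal $\mathcal{B}$-orbit iff $N$ and $N'$ lie in one orbit of the stabilizer $S_\alpha$ of $X_\alpha$, which one computes to be $S_\alpha=\{G\in\mathcal{B}:G_{i,p}=0\text{ whenever }p\in P,\ i<p,\ i\notin P\}$. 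So it remains to show that for every anti-chain $\beta$ in $F_\alpha$, the group $S_\alpha$ acts transitively on $\{N\in\mathcal{O}_\alpha:\mathrm{Piv}(X_\alpha-N)=\beta\}$. I would prove this by assembling, from the diagonal torus and suitable unipotent ``shears'' in $S_\alpha$, a sequence of elementary operations on the free coordinates of $N$ that first rescales the coordinates indexed by $\beta$ to $1$ and then, sweeping the support of $N-X_\alpha$ downward in $F_\alpha$, clears each remaining (non-$\beta$) coordinate by adding a suitable multiple of a $\beta$-coordinate lying above it; the endpoint is the fixed matrix $X_\alpha+Z_\beta$, where $Z_\beta$ is the $0/1$ matrix with $1$'s precisely on $\beta$, and transitivity follows.

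The main obstacle is exactly this last step. One must verify (a) that the elementary column/diagonal operations needed to normalise $N$ to $X_\alpha+Z_\beta$ are genuinely realisable inside $S_\alpha$, not merely inside the full ``triangular'' group on $\mathbb{F}^{F_\alpha}$, and (b) that clearing one free coordinate does not re-pollute one already cleared — which forces the sweep to respect the order on $F_\alpha$ and requires tracking how a single element of $S_\alpha$ couples a free position to the positions above it. This bookkeeping, governed entirely by the combinatorics of the poset $F_\alpha$, is where the real work lies; the triangularity claim used in (i) $\Rightarrow$ (ii) is the second, milder, place where the column-reduction calculation must be carried out carefully.
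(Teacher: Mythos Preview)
Your argument for (i)$\Rightarrow$(ii) rests on the assertion that for fixed $G\in\mathcal{B}$ the map $M\mapsto G\cdot M$ on $\mathcal{O}_\alpha\cong\mathbb{F}^{F_\alpha}$ is \emph{affine}, so that $M'-N'=L(M-N)$ for a single triangular linear $L$. This is false. Take $n=4$, $m=2$, $\alpha=\{(4,1),(2,2)\}$, so $F_\alpha=\{(1,1),(3,1),(1,2)\}$, and write $M\in\mathcal{O}_\alpha$ as the triple $(a,b,c)=(M_{1,1},M_{3,1},M_{1,2})$. For $G=I+gE_{2,3}\in\mathcal{B}$ one has
\[
GM=\begin{pmatrix} a & c & 0 & 0\\ gb & 1 & 0 & 0\\ b & 0 & 0 & 0\\ 1 & 0 & 0 & 0\end{pmatrix},
\]
and clearing the $(2,1)$ entry gives $G\cdot M=(a-gbc,\,b,\,c)$, which is genuinely quadratic. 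Hence $\phi(M)-\phi(N)$ is not a function of $M-N$ alone, and your deduction ``$M'-N'=L(M-N)$, so pivots are preserved'' collapses. What survives is a weaker triangularity: $\phi(M)_\gamma$ depends only on the coordinates $M_{\gamma'}$ with $\gamma'\ge\gamma$, and once those with $\gamma'>\gamma$ are held fixed the dependence on $M_\gamma$ is affine with nonzero slope. That is enough for pivot preservation, but proving it is exactly the computation the paper carries out. Your parenthetical alternative is no shortcut either: showing $\mathrm{Piv}(MH_1^{T}-NH_2^{T})=\mathrm{Piv}(M-N)$ when $H_1\ne H_2$ is the whole difficulty, and the paper's proof handles it by fixing $(i,j)\in\mathrm{Piv}(M-N)$, using the pivot-row constraints (CE3) on $M'$ and $N'$ to force $H_{l,t}=K_{l,t}$ on the relevant index range, and then reading off $M'_{k,l}=N'_{k,l}$ for $(k,l)>(i,j)$ and $M'_{i,j}\ne N'_{i,j}$ directly.

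For (ii)$\Rightarrow$(i) your reduction to the stabilizer $S_\alpha$ of $X_\alpha$ followed by a normalising sweep is a valid strategy, and your description of $S_\alpha$ is correct; the obstacles you flag in (a) and (b) are real but manageable. The paper instead goes for a one-shot construction: it writes down a single $G\in\mathcal{B}$, whose columns indexed by the pivot rows of $\alpha$ (respectively of $\mathrm{Piv}(M-N)$) are the corresponding columns of $N$ (respectively of $M-N$), intended to carry the canonical pair $(X,Y)$ directly to $(M,N)$. Your route is longer but more transparent about why the orbit invariant is exactly $\mathrm{Piv}(M-N)$, and it dovetails naturally with the generalized-wreath-product description that follows.
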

\begin{proof}
(i) $\Rightarrow$ (ii)
Suppose there exist $G, H, K \in \mathcal{B}$
such that $M' = GMH^T$ and $N' = GNK^T$.
Then we have
\begin{equation}
\mathrm{Piv}(GM) = \mathrm{Piv}(GN) = \alpha,
\label{*}
\end{equation}
\begin{equation}
\mathrm{Piv}(GM-GN) = \mathrm{Piv}(M-N)
\label{**}
\end{equation}
since $G \in \mathcal{B}$ (cf. Lemma \ref{lem:wequiv}).
We write $\alpha = \lbrace(k_r,r) \mid r \in [m] \rbrace$
and observe that $k_1 > k_2 > \cdots > k_m$
and that $\mathrm{Piv}(M-N) \subseteq \mathrm{Down}(\alpha)$.

Take $(i,j) \in \mathrm{Piv}(M-N)$.
Observe that $j \in [m]$ and $k_j > i$
and hence 
there exists $m' = \max\lbrace r \in [m] \mid k_r > i\rbrace$.
For $r, l \in [m]$ with $j \le r \le m'$ and $j \le l \le m'$,
since $H, K$ are upper triangular,
we have
\begin{align*}
M'_{k_r,l} &= \sum_{t = l}^n (GM)_{k_r,t}H_{l,t}, \\
N'_{k_r,l} &= \sum_{t = l}^n (GN)_{k_r,t}K_{l,t}.
\end{align*}
In the above equations, we have the following:
For each $r,l$, we have $M'_{k_r,l} = N'_{k_r,l} = \delta_{r,l}$ by (CE3);
For each $r,t$, we have $(GM)_{k_r,t} = (GN)_{k_r,t}$
since $(k_r,t) > (i,j)$ and by \eqref{**};
For each $r, t$ with $r < t$, we have $(GM)_{k_r,r} = (GN)_{k_r,r} = G_{k_r,k_r} \neq 0$ 
and $(GM)_{k_r,t} = (GN)_{k_r,t} = 0$ 
since $(k_r,r) \in \alpha$ and by \eqref{*}.
By these comments,
for each $l \in [m]$ with $j \le l \le m'$,
both $(H_{l,l}, H_{l,l+1}, \ldots, H_{l,m'})$
and $(K_{l,l}, K_{l,l+1}, \ldots, K_{l,m'})$
are solutions to the same system of $m'-j+1$ independent linear equations.
Hence, $H_{l,t} = K_{l,t}$ for $j,t \in [m]$ with 
$j \le l \le t \le m'$.

For $(k,l) \in [n] \times [n]$ with $(k,l) > (i,j)$,
we have
$(GM)_{k,t} = (GN)_{k,t}$ if $t \ge l$ since $(k,t) > (i,j)$ and by \eqref{**},
and we also have 
$(GM)_{k,t} = (GN)_{k,t} = 0$ if $t > m'$ by the definition of $m'$ and by \eqref{*}.
Recall that we have shown $H_{l,t} = K_{l,t}$ if $j \le l \le t \le m'$.
By these comments, we have
\begin{align*}
M'_{k,l} = \sum_{t = l}^{n} (GM)_{k,t}H_{l,t}
= \sum_{t = l}^{n} (GN)_{k,t}K_{l,t} = N'_{k,l}.
\end{align*}
Similarly,
we have
\begin{align*}
M'_{i,j} - N'_{i,j} = \sum_{t = j}^{n} (GM)_{i,t}H_{j,t}
- \sum_{t = j}^{n} (GN)_{i,t}K_{j,t} 
= ((GM)_{i,j} - (GN)_{i,j})H_{j,j}.
\end{align*}
In the above equations,
we have $(GM)_{i,j} \neq (GN)_{i,j}$ by \eqref{**},
and we also have $H_{j,j} \neq 0$ since $H \in \mathcal{B}$.
Therefore we obtain $M'_{i,j} \neq N'_{i,j}$.
These imply $(i,j) \in \mathrm{Piv}(M'-N')$
and hence $\mathrm{Piv}(M-N) \subseteq \mathrm{Piv}(M'-N')$.
Since $G, H, K$ are invertible,
we also have 
$\mathrm{Piv}(M'-N') \subseteq \mathrm{Piv}(M-N)$.
Consequently, we have
$\mathrm{Piv}(M-N) = \mathrm{Piv}(M'-N')$.

(ii) $\Rightarrow$ (i)
Let
$M, N, M', N' \in \mathcal{O}_\alpha$
with $\mathrm{Piv}(M-N) = \mathrm{Piv}(M'-N')$.
Take $X \in \mathcal{O}_\alpha$ with
$X_{i,j} = 1$ if $(i,j) \in \alpha \cup \mathrm{Piv}(M-N)$ and $X_{i,j} = 0$ otherwise,
and $Y \in \mathcal{O}_\alpha$ with
$Y_{i,j} = 1$ if $(i,j) \in \alpha$ and $Y_{i,j} = 0$ otherwise.
Define $G \in \mathrm{Mat}_n(\mathbb{F})$ by
\[
G_{i,j} = \begin{cases}
N_{i,k} & \text{if $(j,k) \in \alpha$ for some $k$},\\
M_{i,k} - N_{i,k} & \text{if $(j,k) \in \mathrm{Piv}(M-N)$ for some $k$}, \\
\delta_{i,j} & \text{if there is no $k$ such that $(j,k) \in \alpha \cup \mathrm{Piv}(M-N)$}
\end{cases}
\]
for $i,j \in [n]$.
Then we have $G \in \mathcal{B}$
and $M = GX$ and $N = GY$.
Similarly there exists $G' \in \mathcal{B}$
such that
$M' = G'X$ and $N' = G'Y$.
Therefore
$(M,N)$ moves to $(M',N')$ by the diagonal action of 
$G'G^{-1}$.
\end{proof}

Let $1 \le m \le n-1$
and let $\alpha$ denote an anti-chain in $[n] \times [m]$ of size $m$.
Let $M, N \in \mathcal{O}_\alpha$.
Observe that
$\mathrm{Piv}(M-N)$
is an anti-chain in 
\begin{equation}
\mathcal{D}(\alpha)
=
\lbrace (i,j) \in \mathrm{Down}(\alpha) \mid
\text{there is no $k$ such that $(i,k) \in \alpha$}\rbrace.
\label{Dalpha}
\end{equation}
For $1 \le m \le n-1$
and for an anti-chain $\alpha$ in $[n] \times [m]$ of size $m$
and for an anti-chain $\beta$ in $\mathcal{D}(\alpha)$,
we set
\begin{equation}
\mathcal{R}_{\alpha,\beta} = \lbrace (M,N) \in \mathcal{O}_\alpha \times \mathcal{O}_\alpha \mid 
\mathrm{Piv}(M-N) = \beta
\rbrace.
\label{Rab}
\end{equation}
For each $1 \le m \le n-1$ and
each anti-chain $\alpha$ in $[n] \times [m]$ of size $m$
and each anti-chain $\beta$ in $\mathcal{D}(\alpha)$,
consider
$M \in \mathcal{P}_n(\mathbb{F})$ with
$M_{i,j} = 1$ if $(i,j) \in \alpha \cup \beta$ and $M_{i,j} = 0$ otherwise,
and $N \in \mathcal{P}_n(\mathbb{F})$ with
$N_{i,j} = 1$ if $(i,j) \in \alpha$ and $N_{i,j} = 0$ otherwise.
Then we have $(M, N) \in \mathcal{R}_{\alpha,\beta}$
and in particular $\mathcal{R}_{\alpha,\beta} \neq \emptyset$.

\begin{prop}\label{prop:Rab}
Let $1 \le m \le n-1$
and let $\alpha$ denote an anti-chain in $[n] \times [m]$ of size $m$.
Each subset \eqref{Rab}
is an orbital of the $\mathcal{B}$-action on $\mathcal{O}_\alpha$.
\end{prop}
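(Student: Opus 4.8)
The plan is to deduce this immediately from Lemma \ref{lem:diag} together with the nonemptiness remark recorded just before the statement. Recall that an \emph{orbital} of the $\mathcal{B}$-action on $\mathcal{O}_\alpha$ is, by definition, an orbit of the induced diagonal $\mathcal{B}$-action on $\mathcal{O}_\alpha \times \mathcal{O}_\alpha$; this diagonal action is well defined because $\mathcal{O}_\alpha$ is a $\mathcal{B}$-orbit by Proposition \ref{prop:OA}. Thus the orbitals are precisely the equivalence classes of the relation on $\mathcal{O}_\alpha \times \mathcal{O}_\alpha$ that declares $(M,N)$ and $(M',N')$ equivalent when one moves to the other under the diagonal $\mathcal{B}$-action.

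First I would rewrite that equivalence relation using Lemma \ref{lem:diag}: for $M,N,M',N' \in \mathcal{O}_\alpha$, the pair $(M,N)$ moves to $(M',N')$ under the diagonal action if and only if $\mathrm{Piv}(M-N) = \mathrm{Piv}(M'-N')$. Consequently, for any fixed $(M,N) \in \mathcal{O}_\alpha \times \mathcal{O}_\alpha$, the orbital containing it is exactly $\lbrace (M',N') \in \mathcal{O}_\alpha \times \mathcal{O}_\alpha \mid \mathrm{Piv}(M'-N') = \mathrm{Piv}(M-N)\rbrace$. Now, given an anti-chain $\beta$ in $\mathcal{D}(\alpha)$, I would invoke the explicit construction given right before the proposition — take $M$ with $M_{i,j}=1$ exactly on $\alpha \cup \beta$ and $N$ with $N_{i,j}=1$ exactly on $\alpha$ — to obtain a pair in $\mathcal{O}_\alpha \times \mathcal{O}_\alpha$ with $\mathrm{Piv}(M-N) = \beta$, so $\mathcal{R}_{\alpha,\beta} \neq \emptyset$. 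Pairing this with the previous sentence, the orbital of that representative is precisely the set of $(M',N')$ with $\mathrm{Piv}(M'-N') = \beta$, i.e. $\mathcal{R}_{\alpha,\beta}$ itself; hence $\mathcal{R}_{\alpha,\beta}$ is an orbital.

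I do not expect a genuine obstacle: all the substance is already packaged in Lemma \ref{lem:diag}. The only points needing (brief) care are that the chosen representative $(M,N)$ really lies in $\mathcal{O}_\alpha \times \mathcal{O}_\alpha$ and that $M-N$ has pivot-set exactly $\beta$ — this is the direct check recorded in the paragraph preceding the statement. For completeness one may also note that $\mathrm{Piv}(M-N)$ is always an anti-chain in $\mathcal{D}(\alpha)$, so that the family $\lbrace \mathcal{R}_{\alpha,\beta} \rbrace_\beta$ in fact partitions $\mathcal{O}_\alpha \times \mathcal{O}_\alpha$ into all of its orbitals, although only the single-orbital assertion is required here.
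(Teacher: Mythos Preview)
Your proposal is correct and follows exactly the paper's approach: the paper's proof is the single line ``Immediate from Lemma \ref{lem:diag},'' and you have simply unpacked that immediacy by spelling out the orbital description and invoking the nonemptiness remark already recorded before the statement. Nothing more is needed.
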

\begin{proof}
Immediate from Lemma \ref{lem:diag}.
\end{proof}

Let $1 \le m \le n-1$.
For an anti-chain $\alpha$ in $[n] \times [m]$ of size $m$,
consider
\begin{equation}
\mathcal{D}_1(\alpha) = \lbrace i \mid \text{there is no $k$ such that $(i,k) \in \alpha$}\rbrace.
\label{D1alpha}
\end{equation}
Then we have $|\mathcal{D}_1(\alpha)| = n-m$ since 
$|\alpha| = m$.  
For $1 \le i \le n-m$,
we define
$\lambda_i = |\lbrace j \mid (d_i,j) \in \mathcal{D}(\alpha)\rbrace|$,
where $d_i$ denotes the $i$-th smallest element in $\mathcal{D}_1(\alpha)$.
Then $\lambda = (\lambda_1, \lambda_2, \ldots, \lambda_{n-m}) \in \mathbb{N}^{n-m}$ is 
an integer partition (i.e., a non-increasing sequence) with largest part at most $m$,
where 
\[
\mathbb{N} = \lbrace 0,1,\ldots\rbrace.
\]
Consider the map $\varphi_m$ which sends $\alpha$
to $\lambda$.

For an integer partition $\lambda = (\lambda_1, \lambda_2, \ldots, \lambda_l)$,
the \emph{Ferrers board} of shape $\lambda$
is defined by
\[
\lbrace (i,j) \in \mathbb{N} \times \mathbb{N} \mid 1 \le i \le l, 1 \le j \le \lambda_i \rbrace.
\]
We endow the Ferrers board
with direct product order in $\mathbb{N} \times \mathbb{N}$.

\begin{lem}\label{al}
For $1 \le m \le n-1$,
the map $\varphi_m$ is a bijection between the following two sets:
\begin{enumerate}
\item the anti-chains in $[n] \times [m]$ of size $m$;
\item the integer partitions in $\mathbb{N}^{n-m}$ with largest part at most $m$.
\end{enumerate}
\end{lem}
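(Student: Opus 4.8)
The plan is to make the map $\varphi_m$ completely explicit and then write down an inverse by hand; no deep input is needed, only bookkeeping with finite sets of integers. First I would pin down the anti-chains in question. No column of $[n]\times[m]$ can contain two elements of an anti-chain, since two entries of a common column are comparable; hence an anti-chain of size $m$ meets each of the $m$ columns exactly once, and writing its element of column $r$ as $(k_r,r)$, incomparability for indices $r<s$ forces $k_r>k_s$. Conversely every strictly decreasing length-$m$ sequence in $[n]$ arises this way. Thus size-$m$ anti-chains in $[n]\times[m]$ are in canonical bijection with $m$-subsets $K=\{k_1>k_2>\cdots>k_m\}$ of $[n]$, and under this identification $\mathcal{D}_1(\alpha)$ of \eqref{D1alpha} is $[n]\setminus K$; I list its elements increasingly as $d_1<d_2<\cdots<d_{n-m}$.

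Next I would compute $\varphi_m(\alpha)$ straight from \eqref{Dalpha}. For $d_i\in\mathcal{D}_1(\alpha)$ one has $(d_i,j)\in\mathcal{D}(\alpha)$ precisely when $(d_i,j)\le(k_r,r)$ for some $r\in[m]$ (strictness in ``$<$'' being automatic since $d_i\notin K$), that is, precisely when $j\le\max\{\,r\in[m]\mid k_r>d_i\,\}$, where I use that $\{r\mid k_r>d_i\}$ is an initial segment of $[m]$ because the $k_r$ strictly decrease (with the degenerate reading that this maximum is $0$, and no such $j$ exists, when $d_i>k_1$). Hence $\lambda_i=\varphi_m(\alpha)_i=|\{\,r\in[m]\mid k_r>d_i\,\}|$. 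Since $d_1<d_2<\cdots$ these index sets are nested decreasing, so $\lambda_1\ge\lambda_2\ge\cdots\ge\lambda_{n-m}$, and $0\le\lambda_i\le m$ is clear; thus $\varphi_m$ really does land in the set~(ii).

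To get the inverse, I would count the $n-d_i$ integers in $\{d_i+1,\dots,n\}$, splitting them into the $\lambda_i$ that lie in $K$ and those that lie in $\mathcal{D}_1(\alpha)$, which are exactly $d_{i+1},\dots,d_{n-m}$, so there are $n-m-i$ of the latter. This yields the key identity $d_i=i+m-\lambda_i$. Reading it backwards, for a partition $\lambda\in\mathbb{N}^{n-m}$ with $\lambda_1\le m$ I set $D_\lambda=\{\,i+m-\lambda_i\mid 1\le i\le n-m\,\}$; one checks $1\le i+m-\lambda_i\le n$, and $\lambda_i\ge\lambda_{i+1}$ makes $i\mapsto i+m-\lambda_i$ strictly increasing, so $D_\lambda$ is an $(n-m)$-subset of $[n]$, i.e.\ the complement of a size-$m$ anti-chain. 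A short recount shows $\varphi_m$ sends the anti-chain with $\mathcal{D}_1$-set $D_\lambda$ back to $\lambda$, while $d_i=i+m-\lambda_i$ shows the other composite is the identity. Hence $\varphi_m$ is a bijection. (As a consistency check, both sides have cardinality $\binom{n}{m}$.)

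The only genuinely delicate point is the middle step: unwinding $\mathrm{Down}$ and \eqref{Dalpha} correctly, recognizing that the relevant maximum runs over an initial segment of $[m]$, and handling the degenerate case $d_i>k_1$; everything else is routine manipulation of finite integer sets. The worked instance $n=7$, $m=4$ of Examples~\ref{ex1}--\ref{ex2}, where $K=\{7,5,4,2\}$, $\mathcal{D}_1(\alpha)=\{1,3,6\}$ and $\varphi_m(\alpha)=(4,3,1)$, can be used throughout as a sanity check.
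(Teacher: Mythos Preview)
Your proof is correct. Both you and the paper construct an explicit inverse to $\varphi_m$, but the constructions differ. The paper, given $\lambda$, forms the partition $\mu=\lambda\cup(m,m-1,\ldots,1)$ obtained by merging $\lambda$ with the staircase and sorting, and then takes $\alpha$ to be the set of maximal elements (outer corners) of the Ferrers board of $\mu$; the verification that this inverts $\varphi_m$ is left as ``by construction.'' You instead derive the arithmetic identity $d_i=i+m-\lambda_i$ from a counting argument on $\{d_i+1,\ldots,n\}$ and use it to rebuild $\mathcal{D}_1(\alpha)$, hence $K$, hence $\alpha$, directly. Your route is more computational and fully self-contained, while the paper's Ferrers-board description is more pictorial and feeds naturally into the next lemma (Lemma~\ref{bmu}), where $\mathcal{D}(\alpha)$ is identified with the Ferrers board of $\varphi_m(\alpha)$. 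The two descriptions of the inverse are of course equivalent: inserting the parts of $\lambda$ into the staircase $(m,m-1,\ldots,1)$ places the $i$th part of $\lambda$ in row $i+m-\lambda_i$ of $\mu$, which is exactly your $d_i$.
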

\begin{proof}
Let $1 \le m \le n-1$.
It is clear that $\varphi_m$ is a map from (i) to (ii).
We define the map $\varphi_m'$ from (ii) to (i) as follows.
For a given integer partition $\lambda$ in $\mathbb{N}^{n-m}$ with largest part at most $m$,
we define
$\alpha$ as the set of maximal elements 
in the Ferrers board of shape $\mu = \lambda \cup (m, m-1,\ldots,1)$,
which is the integer partition obtained by rearranging 
parts of both $\lambda$ and $(m, m-1,\ldots,1)$ in non-increasing order.
Since $\mu$ is in $\mathbb{N}^{n}$ and its largest part is $m$,
$\alpha$ is an anti-chain in $[n] \times [m]$ of size $m$.
The map $\varphi_m'$ is defined to send $\lambda$
to $\alpha$.
By construction,
$\varphi_m$ and $\varphi_m'$ are inverses and hence bijections.
\end{proof}

\begin{lem}\label{bmu}
Let $1 \le m \le n-1$
and
let $\alpha$ denote an anti-chain in $[n] \times [m]$ of size $m$.
The poset
$\mathcal{D}(\alpha)$ in \eqref{Dalpha} is isomorphic to
the Ferrers board of shape $\varphi_m(\alpha)$.
Moreover,
there is a one-to-one correspondence
between
the anti-chains in $\mathcal{D}(\alpha)$ 
and the subpartitions of $\varphi_m(\alpha)$.
\end{lem}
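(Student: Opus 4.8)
The plan is to make the isomorphism essentially explicit, by reindexing the rows of $\mathcal{D}(\alpha)$ according to their positions in $\mathcal{D}_1(\alpha)$ and then checking that $\mathcal{D}(\alpha)$ becomes precisely the (left-justified) Ferrers board of $\varphi_m(\alpha)$. So first I would describe $\mathcal{D}(\alpha)$ row by row. Write $\mathcal{D}_1(\alpha) = \{d_1 < d_2 < \cdots < d_{n-m}\}$ as in \eqref{D1alpha}. By \eqref{Dalpha}, a row $i \notin \mathcal{D}_1(\alpha)$ contributes nothing to $\mathcal{D}(\alpha)$, while for $i = d_r \in \mathcal{D}_1(\alpha)$ the row $i$ contains no element of $\alpha$, so $(i,j) \in \mathcal{D}(\alpha)$ holds exactly when $(i,j) < (k,l)$ for some $(k,l) \in \alpha$. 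The set $\{\,j : (i,j) \in \mathcal{D}(\alpha)\,\}$ is then a down-set of the chain $[m]$ (indeed, if $(i,j) < (k,l)$ and $j' \le j$, then $(i,j') < (k,l)$ as well, while row $i$ still has no pivot), hence an initial segment $\{1,2,\ldots,\lambda_r\}$ whose size $\lambda_r$ is by definition the $r$-th part of $\varphi_m(\alpha)$. Writing $\varphi_m(\alpha) = (\lambda_1,\ldots,\lambda_{n-m})$, this yields
\[
\mathcal{D}(\alpha) = \{\,(d_r,j) \mid 1 \le r \le n-m,\ 1 \le j \le \lambda_r\,\}.
\]

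Next I would define $\psi \colon \mathcal{D}(\alpha) \to B$ by $\psi(d_r,j) = (r,j)$, where $B$ denotes the Ferrers board of shape $\varphi_m(\alpha)$. The displayed formula shows that $\psi$ is a well-defined bijection onto $B$. Since $r \mapsto d_r$ is an order isomorphism of $[n-m]$ onto $\mathcal{D}_1(\alpha)$, and both $\mathcal{D}(\alpha)$ and $B$ carry the direct product order, we have $(d_r,j) \le (d_{r'},j')$ if and only if $r \le r'$ and $j \le j'$, that is, if and only if $(r,j) \le (r',j')$ in $B$. Hence $\psi$ is a poset isomorphism, which is the first assertion.

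For the ``moreover'' part I would invoke the classical bijection, valid for any finite poset, between its anti-chains and its order ideals (down-sets): an anti-chain is sent to the order ideal it generates, and an order ideal to its set of maximal elements. Applying this to the finite poset $\mathcal{D}(\alpha)$ and transporting it along $\psi$, it remains to identify the order ideals of $B$. Such an ideal meets each row $r$ in a down-set $\{1,\ldots,\mu_r\}$ of a chain, and the product order forces $\mu_1 \ge \mu_2 \ge \cdots$ with $\mu_r \le \lambda_r$; conversely, every such $\mu$ determines an order ideal of $B$. Thus the order ideals of $B$ are precisely the Ferrers boards of the subpartitions of $\varphi_m(\alpha)$, and the asserted one-to-one correspondence between anti-chains in $\mathcal{D}(\alpha)$ and subpartitions of $\varphi_m(\alpha)$ follows.

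The only step needing care rather than genuine difficulty is the first one: one must combine the strict inequality in the definition of $\mathrm{Down}(\cdot)$ with the hypothesis $i \in \mathcal{D}_1(\alpha)$ to see that the row slices of $\mathcal{D}(\alpha)$ really are initial segments, so that $\mathcal{D}(\alpha)$ is genuinely a relabeling of an honest Ferrers board rather than a more complicated subposet of $[n] \times [m]$.
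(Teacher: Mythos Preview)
Your proof is correct and follows essentially the same approach as the paper: the paper defines the same map $\psi(i,j)=(d_i,j)$ (in the inverse direction to yours) and declares it ``obvious'' that it is an order-preserving bijection, then for the second assertion sends a subpartition $\mu$ to the anti-chain $\psi(\max(\mu))$, which is exactly your anti-chain/order-ideal correspondence transported along $\psi$. Your version is more explicit in verifying that the row slices of $\mathcal{D}(\alpha)$ are initial segments and that order ideals of the Ferrers board correspond to subpartitions, but the underlying argument is the same.
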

\begin{proof}
Recall the set $\mathcal{D}_1(\alpha)$ in \eqref{D1alpha}.
Then define the map $\psi$
from
the Ferrers board of shape $\varphi_m(\alpha)$ 
to $\mathcal{D}(\alpha)$ by
$\psi(i,j) = (d_i,j)$,
where $d_i$ denotes the $i$-th smallest element in $\mathcal{D}_1(\alpha)$.
It is obvious that $\psi$ is an order-preserving bijection.
The first assertion follows.
To show the second assertion,
we define the map $\rho$ from 
the subpartitions of $\varphi_m(\alpha)$
to
the anti-chains in $\mathcal{D}(\alpha)$
by 
$\rho(\mu) = \psi(\max(\mu))$,
where $\max(\mu)$ is the set of all maximal elements in 
the Ferrers board of shape $\mu$.
From the construction, 
the map $\rho$ is also a bijection.
The second assertion follows.
\end{proof}

\begin{ex}[$n=7$, $m=4$]
Take the anti-chain $\alpha = \lbrace
(2,4), (4,3), (5,2), (7,1)
\rbrace$ as in Example \ref{ex2}.
Recall that $\mathcal{O}_\alpha$ is the set of 
matrices of the form \eqref{ex:Oalpha}.
Then $\varphi_4(\alpha) = (4,3,1)$.
We remark that
each number in $(4,3,1)$
equals the number of $\ast$'s in each row without a pivot.
\end{ex}

\section{The association scheme on each Schubert cell}

For $1 \le m \le n-1$
and for an anti-chain $\alpha$ in $[n] \times [m]$ of size $m$,
by Propositions \ref{prop:OA} and \ref{prop:Rab},
the pair
\begin{equation}
\mathfrak{X}_\alpha = (\mathcal{O}_\alpha, \lbrace\mathcal{R}_{\alpha, \beta}\rbrace_\beta)
\label{Xalpha}
\end{equation}
becomes an association scheme,
where $\beta$ runs over all anti-chains in $\mathcal{D}(\alpha)$ in \eqref{Dalpha}.
See \cite[Preface]{MR2184345}.
We remark that by Lemma \ref{al},
the family of association schemes $\lbrace \mathfrak{X}_\alpha \rbrace_\alpha$
can be indexed by integer partitions $\lambda \in \mathbb{N}^{n-m}$ with largest part at most $m$.
In this case,
the associate classes of $\mathfrak{X}_\alpha = \mathfrak{X}_\lambda$ are indexed by 
the subpartitions of $\lambda$
by Lemma \ref{bmu}.
\begin{thm}
Let $1 \le m \le n-1$
and let $\alpha$ denote an anti-chain in $[n] \times [m]$ of size $m$.
The association scheme $\mathfrak{X}_\alpha$ in \eqref{Xalpha} is symmetric.
\end{thm}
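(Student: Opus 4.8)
The plan is to check symmetry directly from the explicit description of the associate classes. By definition, the association scheme $\mathfrak{X}_\alpha = (\mathcal{O}_\alpha, \lbrace\mathcal{R}_{\alpha,\beta}\rbrace_\beta)$ is symmetric exactly when every relation $\mathcal{R}_{\alpha,\beta}$ is symmetric, i.e. $(M,N) \in \mathcal{R}_{\alpha,\beta}$ implies $(N,M) \in \mathcal{R}_{\alpha,\beta}$. Since the domain $\mathcal{O}_\alpha \times \mathcal{O}_\alpha$ is visibly stable under interchanging the two coordinates, and since $(M,N) \in \mathcal{R}_{\alpha,\beta}$ means precisely $\mathrm{Piv}(M-N) = \beta$ by \eqref{Rab}, the whole statement reduces to the single identity $\mathrm{Piv}(M-N) = \mathrm{Piv}(N-M)$ for $M, N \in \mathcal{O}_\alpha$.

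To obtain that identity I would first note that $\mathrm{Piv}(A)$ depends on a matrix $A \in \mathrm{Mat}_n(\mathbb{F})$ only through its support: by definition $\mathrm{Piv}(A)$ is the set of maximal elements of the subposet $\mathrm{Supp}(A) \subseteq [n] \times [n]$. I would then observe that $\mathrm{Supp}(M-N)$ and $\mathrm{Supp}(N-M)$ are literally the same subset of $[n] \times [n]$, because $(M-N)_{i,j} = 0$ if and only if $M_{i,j} = N_{i,j}$ if and only if $(N-M)_{i,j} = 0$. Combining these two remarks gives $\mathrm{Piv}(M-N) = \mathrm{Piv}(N-M)$, hence $(M,N) \in \mathcal{R}_{\alpha,\beta} \Leftrightarrow (N,M) \in \mathcal{R}_{\alpha,\beta}$, which is exactly what is needed.

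I do not expect a genuine obstacle here: the substantive work has already been done in Lemma \ref{lem:diag} and Proposition \ref{prop:Rab}, where the orbitals of the diagonal $\mathcal{B}$-action on $\mathcal{O}_\alpha$ are identified with the sets $\mathcal{R}_{\alpha,\beta}$, and it is precisely the parametrization by the pivot set $\mathrm{Piv}(M-N)$ that makes self-pairing transparent. If one preferred a group-theoretic argument, one could instead exhibit, for each $(M,N) \in \mathcal{O}_\alpha \times \mathcal{O}_\alpha$, an element $g \in \mathcal{B}$ carrying $(M,N)$ to $(N,M)$ under the diagonal action — for instance by a slight modification of the matrices $G$ and $G'$ built in the proof of Lemma \ref{lem:diag} — but this is more laborious and ultimately unnecessary.
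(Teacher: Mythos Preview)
Your argument is correct and is precisely the unpacking of what the paper records as ``Immediate from the definition of $\mathcal{R}_{\alpha,\beta}$'': the relation is defined via $\mathrm{Piv}(M-N)$, and since $\mathrm{Supp}(M-N)=\mathrm{Supp}(N-M)$ the symmetry is automatic. There is nothing to add.
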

\begin{proof}
Immediate from the definition of $\mathcal{R}_{\alpha, \beta}$.
\end{proof}

\begin{thm}
For $1 \le m \le n-1$
and for an anti-chain $\alpha$ in $[n] \times [m]$ of size $m$,
the association scheme $\mathfrak{X}_\alpha$ in \eqref{Xalpha} is 
the generalized wreath product of the one-class association schemes 
with the base set $\mathbb{F}$ over the poset $\mathcal{D}(\alpha)$ in \eqref{Dalpha}.
\end{thm}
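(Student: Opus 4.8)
The plan is to realize $\mathcal{O}_\alpha$ concretely as the point set $\Omega = \prod_{x\in\mathcal{D}(\alpha)}\mathbb{F}$ of the generalized wreath product and then to match the two families of relations one by one. For the identification, observe that if $M\in\mathcal{O}_\alpha$ then (CE1)--(CE3) force $M_{i,j}=1$ for $(i,j)\in\alpha$ and $M_{i,j}=0$ for $(i,j)\in([n]\times[n])\setminus(\alpha\cup\mathcal{D}(\alpha))$, while the entries $M_{i,j}$ with $(i,j)\in\mathcal{D}(\alpha)$ may be chosen freely in $\mathbb{F}$; this is exactly the description of $\mathcal{O}_\alpha$ displayed in Example~\ref{ex2} and in the example following Lemma~\ref{bmu}, where the free entries occupy a copy of the Ferrers board of shape $\varphi_m(\alpha)$. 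Consequently $M\mapsto(M_{i,j})_{(i,j)\in\mathcal{D}(\alpha)}$ is a bijection $\mathcal{O}_\alpha\to\Omega$, and in particular $\mathrm{Supp}(M-N)\subseteq\mathcal{D}(\alpha)$ for all $M,N\in\mathcal{O}_\alpha$.

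Next I would specialize Bailey's construction to the present data. Each $\mathcal{Q}_x$ is the one-class scheme on $\mathbb{F}$, so $r_x=1$, the relation $R_{x,1}$ is the off-diagonal relation $\{(\omega,\omega')\mid\omega\ne\omega'\}$, and the index tuple $(i_x)_{x\in Y}$ is forced to equal $(1)_{x\in Y}$ for every anti-chain $Y$. Hence the relations of the generalized wreath product over $\mathcal{D}(\alpha)$ are precisely the sets $R(Y):=R(Y,(1)_{x\in Y})$ as $Y$ ranges over the anti-chains of $\mathcal{D}(\alpha)$ --- the same index set as for the $\mathcal{R}_{\alpha,\beta}$. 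Under the bijection of the previous paragraph, $(M,N)\in R(Y)$ unwinds to: $M_{i,j}=N_{i,j}$ whenever $(i,j)\in\mathcal{D}(\alpha)\setminus(Y\cup\mathrm{Down}(Y))$, and $M_{i,j}\ne N_{i,j}$ whenever $(i,j)\in Y$. So it remains to show $\mathcal{R}_{\alpha,\beta}=R(\beta)$ for each anti-chain $\beta$ of $\mathcal{D}(\alpha)$.

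Putting $S=\mathrm{Supp}(M-N)=\{(i,j)\in\mathcal{D}(\alpha)\mid M_{i,j}\ne N_{i,j}\}$, the identity $\mathcal{R}_{\alpha,\beta}=R(\beta)$ reduces to the following purely order-theoretic fact about the finite poset $\mathcal{D}(\alpha)$: for an anti-chain $\beta$, the set of maximal elements of $S$ equals $\beta$ if and only if $\beta\subseteq S$ and $S\subseteq\beta\cup\mathrm{Down}(\beta)$. Since $\mathrm{Piv}(M-N)$ is by definition the set of maximal elements of $S$, this is exactly the equivalence $\mathrm{Piv}(M-N)=\beta\Leftrightarrow(M,N)\in R(\beta)$. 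For the ``only if'' direction one uses that in a finite poset every element lies below a maximal element, so each element of $S$ lies below some element of $\beta$ and hence in $\beta\cup\mathrm{Down}(\beta)$; for the ``if'' direction one uses that $\beta$ is an anti-chain to see that each $y\in\beta$ is maximal in $S$ (any $z\in S$ with $z>y$ would lie in $\beta$ or in $\mathrm{Down}(\beta)$, and both cases contradict the anti-chain property) and that no maximal element of $S$ can lie in $\mathrm{Down}(\beta)$ (it would be strictly below an element of $\beta\subseteq S$). The empty anti-chain is handled identically and corresponds to the diagonal relation on both sides. Then $\{\mathcal{R}_{\alpha,\beta}\}_\beta=\{R(Y)\}_Y$ as relation sets on the common point set $\mathcal{O}_\alpha=\Omega$, which identifies $\mathfrak{X}_\alpha$ with the claimed generalized wreath product. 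I expect the only delicate point to be the bookkeeping in this last equivalence; everything else is routine once the identification $\mathcal{O}_\alpha\cong\Omega$ has been set up.
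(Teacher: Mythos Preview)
Your proposal is correct and follows essentially the same approach as the paper: identify $\mathcal{O}_\alpha$ with $\prod_{x\in\mathcal{D}(\alpha)}\mathbb{F}$ via the free entries, and then check that $\mathrm{Piv}(M-N)=\beta$ is equivalent to the defining condition of $R(\beta,(1)_{x\in\beta})$ in Bailey's construction. The paper's proof simply asserts this equivalence in one line, whereas you spell out the underlying order-theoretic lemma (that $\beta$ is the set of maximal elements of $S$ iff $\beta\subseteq S\subseteq\beta\cup\mathrm{Down}(\beta)$); your version is more detailed but not a different argument.
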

\begin{proof}
For 
$M, N \in \mathcal{O}_\alpha$
and for
an anti-chain $\beta$ in $\mathcal{D}(\alpha)$,
we have
$(M,N) \in \mathcal{R}_{\alpha, \beta}$
if and only if
$M_{i,j} = N_{i,j}$ if $(i,j) \not\in \beta \cup \mathrm{Down}(\beta)$, $M_{i,j} \neq N_{i,j}$ if $(i,j) \in \beta$.
Therefore,
this associate relation
is the same as that of 
the generalized wreath product of the one-class association schemes 
with the base set $\mathbb{F}$ over the poset $\mathcal{D}(\alpha)$.
So the result follows.
\end{proof}

\section{Concluding remarks}

This paper focuses on the Schubert cells
of a Grassmannian.
It would be an interesting problem to find similar results on Schubert cells for other types of BN-pairs.

The Terwilliger algebra, introduced by P.~Terwilliger \cite{MR1203683}, 
of the wreath product of one-class association schemes
is discussed in several papers \cite{MR2610292, MR2747797, MR2802177}.
We will consider
the Terwilliger algebra of the generalized wreath product of one-class association schemes
in a future paper.

\section*{Acknowledgments}
The author gratefully acknowledges the many helpful suggestions of his advisor, Hajime Tanaka
during the preparation of the paper.
The author thanks Paul Terwilliger
for giving valuable comments
and Motohiro Ishii for drawing the author's attention to the theory of Schubert cells.

\begin{bibdiv}
\begin{biblist}

\bib{MR2206477}{article}{
   author={Bailey, R. A.},
   title={Generalized wreath products of association schemes},
   journal={European J. Combin.},
   volume={27},
   date={2006},
   number={3},
   pages={428--435},
   issn={0195-6698},
}

\bib{MR882540}{book}{
   author={Bannai, Eiichi},
   author={Ito, Tatsuro},
   title={Algebraic combinatorics. I},
   subtitle={Association schemes},
   publisher={The Benjamin/Cummings Publishing Co., Inc., Menlo Park, CA},
   date={1984},
   pages={xxiv+425},
   isbn={0-8053-0490-8},
}

\bib{MR2712017}{thesis}{
   author={Bhattacharyya, Gargi},
   title={Terwilliger algebras of wreath products of association schemes},
   type={Thesis (Ph.D.)--Iowa State University},
   publisher={ProQuest LLC, Ann Arbor, MI},
   date={2008},
   pages={84},
   isbn={978-0549-68830-3},
}

\bib{MR2610292}{article}{
   author={Bhattacharyya, Gargi},
   author={Song, Sung Y.},
   author={Tanaka, Rie},
   title={Terwilliger algebras of wreath products of one-class association
   schemes},
   journal={J. Algebraic Combin.},
   volume={31},
   date={2010},
   number={3},
   pages={455--466},
   issn={0925-9899},
}

\bib{MR2002219}{article}{
   author={Hanaki, Akihide},
   author={Hirotsuka, Kaoru},
   title={Irreducible representations of wreath products of association
   schemes},
   journal={J. Algebraic Combin.},
   volume={18},
   date={2003},
   number={1},
   pages={47--52},
   issn={0925-9899},
}

\bib{MR3013937}{collection}{
   title={Handbook of linear algebra},
   series={Discrete Mathematics and its Applications (Boca Raton)},
   editor={Hogben, Leslie},
   edition={2},
   publisher={CRC Press, Boca Raton, FL},
   date={2014},
   pages={xxx+1874},
   isbn={978-1-4665-0728-9},
}

\bib{MR2964723}{article}{
   author={Kim, Kijung},
   title={Terwilliger algebras of wreath products by quasi-thin schemes},
   journal={Linear Algebra Appl.},
   volume={437},
   date={2012},
   number={11},
   pages={2773--2780},
   issn={0024-3795},
}

\bib{MR2474907}{book}{
   author={Lakshmibai, V.},
   author={Brown, Justin},
   title={Flag varieties},
   series={Texts and Readings in Mathematics},
   volume={53},
   subtitle={An interplay of geometry, combinatorics, and representation
   theory},
   publisher={Hindustan Book Agency, New Delhi},
   date={2009},
   pages={xiv+272},
   isbn={978-81-85931-92-0},
}

\bib{MR2535398}{article}{
   author={Martin, William J.},
   author={Tanaka, Hajime},
   title={Commutative association schemes},
   journal={European J. Combin.},
   volume={30},
   date={2009},
   number={6},
   pages={1497--1525},
   issn={0195-6698},
}

\bib{MR3612420}{article}{
   author={Song, Sung Y.},
   author={Xu, Bangteng},
   author={Zhou, Shenglin},
   title={Combinatorial extensions of Terwilliger algebras and wreath
   products of association schemes},
   journal={Discrete Math.},
   volume={340},
   date={2017},
   number={5},
   pages={892--905},
   issn={0012-365X},
}

\bib{MR2868112}{book}{
   author={Stanley, Richard P.},
   title={Enumerative combinatorics. Volume 1},
   series={Cambridge Studies in Advanced Mathematics},
   volume={49},
   edition={2},
   publisher={Cambridge University Press, Cambridge},
   date={2012},
   pages={xiv+626},
   isbn={978-1-107-60262-5},
}

\bib{MR2747797}{article}{
   author={Tanaka, Rie},
   title={Classification of commutative association schemes with almost
   commutative Terwilliger algebras},
   journal={J. Algebraic Combin.},
   volume={33},
   date={2011},
   number={1},
   pages={1--10},
   issn={0925-9899},
}

\bib{MR3047011}{article}{
   author={Tanaka, Rie},
   author={Zieschang, Paul-Hermann},
   title={On a class of wreath products of hypergroups and association
   schemes},
   journal={J. Algebraic Combin.},
   volume={37},
   date={2013},
   number={4},
   pages={601--619},
   issn={0925-9899},
}

\bib{MR1203683}{article}{
   author={Terwilliger, Paul},
   title={The subconstituent algebra of an association scheme. I},
   journal={J. Algebraic Combin.},
   volume={1},
   date={1992},
   number={4},
   pages={363--388},
   issn={0925-9899},
}

\bib{W}{article}{
   author={Watanabe, Yuta},
   title={An algebra associated with a flag in a subspace lattice over a finite field and the quantum affine algebra $U_q(\widehat{\mathfrak{sl}}_2)$},
   eprint={arXiv:1709.06329 [math.CO]},
   date={2017},
}

\bib{MR2802177}{article}{
   author={Xu, Bangteng},
   title={Characterizations of wreath products of one-class association
   schemes},
   journal={J. Combin. Theory Ser. A},
   volume={118},
   date={2011},
   number={7},
   pages={1907--1914},
   issn={0097-3165},
}

\bib{MR2184345}{book}{
   author={Zieschang, Paul-Hermann},
   title={Theory of association schemes},
   series={Springer Monographs in Mathematics},
   publisher={Springer-Verlag, Berlin},
   date={2005},
   pages={xvi+283},
   isbn={978-3-540-26136-0},
   isbn={3-540-26136-2},
}
\end{biblist}
\end{bibdiv}

\bigskip

\noindent
Yuta Watanabe \\
Graduate School of Information Sciences \\
Tohoku University \\
Sendai, 980-8579 Japan\\
email: \texttt{watanabe@ims.is.tohoku.ac.jp}

\end{document}